\addtolength{\hoffset}{-0.5cm}
\documentclass[12pt]{amsart}
\usepackage{graphicx}
\usepackage{amsmath}
\usepackage{amssymb}
\usepackage{amsfonts}
\usepackage{verbatim}
\usepackage{scalefnt}
\usepackage{multirow}
\usepackage{mathtools}
\usepackage{float}
\restylefloat{figure}
\usepackage[margin=1.3in]{geometry}

\addtolength{\evensidemargin}{-0.4cm}
\addtolength{\textwidth}{1cm}
\addtolength{\textheight}{1cm}

\usepackage{pstricks,tikz}

\usepackage{fancyhdr}
\usepackage{hyperref}

\begin{document}

\def\COMMENT#1{}

\newtheorem{theorem}{Theorem}
\newtheorem{lemma}[theorem]{Lemma}
\newtheorem{proposition}[theorem]{Proposition}
\newtheorem{corollary}[theorem]{Corollary}
\newtheorem{conjecture}[theorem]{Conjecture}
\newtheorem{claim}[theorem]{Claim}
\newtheorem{definition}[theorem]{Definition}
\newtheorem{question}[theorem]{Question}


\def\eps{{\varepsilon}}
\newcommand{\cP}{\mathcal{P}}
\newcommand{\cT}{\mathcal{T}}
\newcommand{\cL}{\mathcal{L}}
\newcommand{\ex}{\mathbb{E}}
\newcommand{\eul}{e}
\newcommand{\pr}{\mathbb{P}}

\title[Local conditions for exponentially many subdivisions]{Local conditions for exponentially many subdivisions}
\author{Hong Liu, Maryam Sharifzadeh and Katherine Staden}
\thanks{H.L.\ was supported by EPSRC grant~EP/K012045/1, ERC
grant~306493 and the Leverhulme Trust
Early Career Fellowship~ECF-2016-523. M.Sh and K.S.\ were supported by ERC grant~306493.}

\begin{abstract}
Given a graph $F$, let $s_t(F)$ be the number of subdivisions of $F$, each with a different vertex set, which one can guarantee in a graph $G$ in which every edge lies in at least $t$ copies of $F$.
In 1990, Tuza asked for which graphs $F$ and large $t$, one has that $s_t(F)$ is exponential in a power of $t$.
We show that, somewhat surprisingly, the only such $F$ are complete graphs, and for every $F$ which is not complete, $s_t(F)$ is polynomial in $t$. Further, for a natural strengthening of the local condition above, we also characterise those $F$ for which $s_t(F)$ is exponential in a power of $t$.

\medskip

\noindent \textbf{Mathematics Subject Classication.} 05C10, 05C35, 05C83.
\end{abstract}


\date{\today}
\maketitle

\section{Introduction and results}

A \emph{subdivision} or \emph{topological minor} of a graph $F$, denoted by $TF$, is a graph obtained by replacing the edges of $F$ with internally vertex-disjoint paths between endpoints.
A classical result of Mader~\cite{mader} from 1972 states that there is some function $f(d)$ of $d$ such that any graph with average degree $f(d)$ contains a copy of $TK_d$. Mader, and independently Erd\H{o}s and Hajnal~\cite{eh}, conjectured that the correct order of magnitude for $f(d)$ is $d^2$.
This was verified by Bollob\'as and Thomason~\cite{bt} and independently by Koml\'os and Szemer\'edi~\cite{ks}.
The current best bound is due to K\"uhn and Osthus~\cite{KO-sub}.

\begin{theorem}[\cite{KO-sub}]\label{btks}
Let $G$ be a graph with average degree at least $(1+o(1))\frac{10}{23} d^2$. Then $G \supseteq TK_d$.
\end{theorem}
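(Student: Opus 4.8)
The plan is to reduce the problem to routing a bounded system of internally disjoint paths inside a dense, well-connected ``core'' of $G$, and then to glue these paths into the subdivision. Recall that a $TK_d$ is determined by $d$ \emph{branch vertices} $b_1,\dots,b_d$ together with $\binom{d}{2}$ internally disjoint paths $P_{ij}$, one joining $b_i$ to $b_j$ for each pair. Thus it suffices to (i) find $d$ branch vertices each of degree at least $d-1$ for which one can choose distinct ``stub'' neighbours $x_{ij}$ ($j\neq i$), all $d(d-1)$ of them distinct, and (ii) connect $x_{ij}$ to $x_{ji}$ by $\binom{d}{2}$ internally disjoint paths avoiding the branch vertices. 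The entire difficulty lies in carrying out (ii) while spending as little average degree as possible: this is why the extremal function is quadratic in $d$, since realising $\binom{d}{2}\approx d^2/2$ disjoint paths forces cuts, and hence degrees, of order $d^2$.

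First I would pass from $G$ to a more structured subgraph. By the standard greedy extraction---repeatedly deleting any vertex of degree below half the current average degree---$G$ contains a subgraph of minimum degree at least $(1+o(1))\tfrac{5}{23}d^2$ without essentially lowering the average degree. Within this I would isolate a core $H$ that simultaneously \emph{expands} well (every not-too-large vertex set has many external neighbours) and retains many vertices of degree of order $d^2$; such an expanding core can be extracted at a negligible loss in density, and it is the object that will support the path system. The point of this step is to arrange the two resources we need at once: enough high-degree vertices to host the branch vertices and their stubs, and enough expansion to thread the paths.

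Next I would build the subdivision inside $H$. Fix $d$ high-degree vertices as branch vertices and, greedily, reserve for each a set of $d-1$ private stub neighbours so that all $d(d-1)$ stubs are distinct; this is possible because the branch vertices have degree comfortably exceeding $d-1$. Then route the paths $P_{ij}$ one pair at a time: since $H$ expands, any two stubs can be joined by a short path, and because each such path is short and the total number of them, $\binom{d}{2}$, is tiny compared with $|H|$, the paths already constructed (together with the branch vertices) form a negligible obstruction that the next path can avoid. Equivalently, one shows that after deleting the $O(d^2)$ vertices used so far the remaining graph still links the next prescribed pair, via a Menger/min-cut argument in the expander. Together with the stub edges, the resulting paths yield a $TK_d$.

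The hard part---and where the precise coefficient $\tfrac{10}{23}$ is gained or lost---is the calibration of this core and routing. Passing naively through global $\binom{d}{2}$-linkedness is far too expensive, since a $k$-linked graph must be $(2k-1)$-connected and hence have minimum degree at least $2\binom{d}{2}\approx d^2$, already double the target; the saving comes precisely from routing one specific linkage through an expander rather than demanding the universal linkedness property. Optimising the interacting parameters---how strongly the core must expand, how large the branch-vertex degrees and stub-sets must be, and how many of the $\binom{d}{2}$ short paths can be threaded before the reserve of vertices is exhausted---is the delicate computation that produces the constant $\tfrac{10}{23}$, with the $o(1)$ term absorbing the lower-order losses from the extraction steps.
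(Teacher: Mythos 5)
This statement is not proved in the paper at all: it is imported as a black box from K\"uhn and Osthus \cite{KO-sub} (building on Koml\'os--Szemer\'edi and Bollob\'as--Thomason), so there is no internal proof to compare against. Judged on its own terms, your proposal is a reasonable sketch of the Koml\'os--Szemer\'edi expander-routing strategy, and you are right that avoiding global $\binom{d}{2}$-linkedness is the key saving. But as written it has a genuine gap, independent of the deferred constant: the routing step breaks down in the dense regime. If $G$ is, say, a clique or a complete bipartite graph on $O(d^2)$ vertices, then the expanding core $H$ has only $O(d^2)$ vertices, while the $\binom{d}{2}$ paths plus their $d(d-1)$ stubs already occupy a constant fraction of $V(H)$. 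The claim that ``the paths already constructed form a negligible obstruction'' is then false, and the one-pair-at-a-time Menger argument does not go through. The actual proofs split into cases according to the ratio of degree to order: the sparse case is handled by expander routing much as you describe, but the dense case needs a separate argument (e.g.\ extracting a dense minor or a highly linked subgraph directly), and it is the dense case that is extremal here --- Jung's example $K_{r,r}$ with $r=\lfloor d^2/8\rfloor$, quoted just after the theorem, lives exactly there.

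Beyond that structural gap, the entire quantitative content --- that the method can be calibrated to give $(1+o(1))\frac{10}{23}d^2$ rather than merely $O(d^2)$ --- is acknowledged but not supplied; the $\frac{10}{23}$ in \cite{KO-sub} comes from a delicate analysis of the bipartite case and does not fall out of the generic expander argument. So the proposal is a correct high-level roadmap for a bound of the form $cd^2$ in the sparse regime, but it is not a proof of the stated theorem.
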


This result is tight up to the constant factor; as Jung~\cite{jung} observed, the complete bipartite graph $K_{r,r}$ with $r = \lfloor d^2/8 \rfloor$ does not contain a copy of $TK_{d+1}$. 

We say that two subgraphs $H,H'$ of a graph $G$ are \emph{distinguishable} if $V(H) \neq V(H')$. 
In 1981, Koml\'os~(see~\cite{tuza2}) conjectured that every graph with minimum degree $d$ contains at least as many distinguishable cycles as $K_{d+1}$,\footnote{Koml\'os's conjecture has recently been resolved by J.~Kim and the authors, see~\cite{KLShS}.} that is, $2^{d+1}-{d+1\choose 2}-d-2$. A weaker conjecture, asking whether minimum degree $d$ forces exponential in $d$ many distinguishable cycles, was verified by Tuza~\cite{tuza2} who obtained a lower bound $2^{\lfloor(d+4)/2\rfloor}-O(d^2)$ for graphs with average degree $d$. If instead one imposes a different local condition on $G$: that every edge lies in at least $t$ triangles; it is shown in~\cite{tuza2} that $G$ contains as least as many distinguishable cycles as $K_{t+2}$. 
In other words, $G$ contains at least as many distinguishable subdivisions of the triangle as $K_{t+2}$.

Our aim in this paper, answering a question of Tuza~\cite{tuza2,tuza,tuza3}, is to generalise this from triangles to arbitrary fixed graphs $F$ and establish for which $F$ \emph{locally} many copies of $F$ guarantee exponentially many distinguishable subdivisions of $F$ \emph{globally}.
To formulate this more precisely, let us say that a graph $G$ is \emph{$(F,t)$-locally dense} if every edge $e$ of $G$ lies in at least $t$ copies of $F$.
Write $s(F,G)$ for the size of the largest collection of distinguishable subdivisions of $F$ in $G$.
Define 
$$s_t(F)=\min\{ s(F,G) : G  \mbox{ is } (F,t)\mbox{-locally dense}\}.$$

Tuza's question~\cite{tuza2} was for which graphs $F$ one can find $c,c' > 0$ such that whenever $t$ is a sufficiently large integer compared to $|F|$, we have $s_t(F) \geq c^{t^{c'}}$.
Our main result answers this question completely, showing that the only such $F$ are complete graphs. For every $F$ that is not complete, we construct graphs that are locally dense in $F$, with only polynomially many distinguishable subdivisions of $F$.

\begin{theorem}\label{main}
Let $\ell \geq 3$ be an integer. Whenever $t$ is sufficiently large, the following hold.
\begin{itemize}
\item[(i)] $s_t(K_\ell) \geq 2^{t^{c'}}$, where $c':=c'(\ell) \geq \frac{1}{2(\ell-2)}$.
\item[(ii)] For all graphs $F$ on $\ell$ vertices which are not complete, 
$$s_t(F) \leq  2^\ell(t+2)^{e(F)+2\ell}.$$
\item[(iii)] Whenever $\ell \geq 4$, we have 
$$s_t(K_\ell^-) \geq t^{(1-o(1))(e(K_\ell^-)-2\ell+5)},$$ 
where $K_\ell^-$ is the graph obtained from $K_\ell$ by removing an edge.\footnote{Here $o(1)$ denotes a function which tends to $0$ as $t \to \infty$.}
\end{itemize}
\end{theorem}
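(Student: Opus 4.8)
The plan is to pass through a large topological clique. First I observe that $(K_\ell,t)$-local density forces large minimum degree on the non-isolated part of $G$: for any edge $uv$, the $\ge t$ copies of $K_\ell$ through it correspond to $\ge t$ copies of $K_{\ell-2}$ inside $G[N(u)\cap N(v)]$, so $\binom{|N(u)\cap N(v)|}{\ell-2}\ge t$ and hence every vertex meeting an edge has degree at least $t^{1/(\ell-2)}$. Applying Theorem~\ref{btks} to the subgraph spanned by the non-isolated vertices (whose average degree is then at least $t^{1/(\ell-2)}$) yields a subdivision $TK_d$ with branch vertices $b_1,\dots,b_d$ and $d=\Omega(t^{1/(2(\ell-2))})$. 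The final step is a ``snaking'' argument: fix $b_1,\dots,b_\ell$ as the branch vertices of a $TK_\ell$, route each pair $b_ib_j$ with $\{i,j\}\neq\{1,2\}$ along the direct subdivided edge of $TK_d$, and for each subset $S\subseteq\{b_{\ell+1},\dots,b_d\}$ route the $b_1b_2$-connection so that it threads through exactly the vertices of $S$ in some order via the corresponding subdivided edges. These stay internally disjoint from the direct paths, and distinct $S$ give distinct vertex sets, so this produces $2^{d-\ell}\ge 2^{t^{1/(2(\ell-2))}}$ distinguishable copies of $TK_\ell$, giving $c'\ge\tfrac{1}{2(\ell-2)}$.

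\textbf{Part (ii).} Here I would exhibit a single locally dense graph with few subdivisions. Since $F$ is not complete, fix non-adjacent $p,q\in V(F)$ and let $G$ be obtained by blowing up $p$ and $q$ into independent sets $P,Q$ of size $t$, leaving the other $\ell-2$ vertices as single vertices inducing $F-\{p,q\}$, and joining $P$ (resp.\ $Q$) to the singles exactly according to the adjacencies of $p$ (resp.\ $q$), with no edges between $P$ and $Q$. Choosing one vertex of $P$ for $p$ and one of $Q$ for $q$ shows every edge lies in at least $t$ copies of $F$, so $G$ is $(F,t)$-locally dense. The key point is that the $\ell-2$ single vertices form a cut of bounded size: since $P\cup Q$ is independent, every edge of any $TF\subseteq G$ is incident to a single vertex, so $TF$ has only $O(\ell^2)$ edges and is ``short''. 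Consequently a subdivision is determined by its $\le\ell$ branch vertices and by the routing of its $e(F)$ paths through the bounded bottleneck, and a careful count bounds the number of distinguishable subdivisions by $2^\ell(t+2)^{e(F)+2\ell}$.

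\textbf{Part (iii).} This is the most delicate part, and I expect the main obstacle to lie here. Rewriting the exponent as $e(K_\ell^-)-2\ell+5=\binom{\ell-2}{2}+1$ suggests the target structure. I would aim to extract a \emph{rich core}: a clique $C$ on $\ell-2$ vertices together with a vertex $b_0\in N(C)$ such that $W:=N(C)\setminus N(b_0)$ has size at least $t^{1-o(1)}$. Given such $C$ and $b_0$, each $w\in W$ is adjacent to all of $C$ but not to $b_0$, so for any $\big(\binom{\ell-2}{2}+1\big)$-subset $T\subseteq W$ one obtains a copy of $TK_\ell^-$ with branch set $C\cup\{b_0\}\cup\{a\}$, where one element $a\in T$ plays the second missing-edge vertex and the $\binom{\ell-2}{2}$ core edges are subdivided through the remaining vertices of $T$. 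Distinct $T$ give distinct vertex sets, so this yields $\binom{|W|}{\binom{\ell-2}{2}+1}\ge t^{(1-o(1))(\binom{\ell-2}{2}+1)}$ distinguishable subdivisions, matching (iii).

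The crux is producing the rich core from the local hypothesis alone. Starting from an edge $ub_0$ that is a spoke (core vertex $u$, missing-edge vertex $b_0$) in many of its $\ge t$ copies, each such copy determines a $K_{\ell-3}$ in $N(u)\cap N(b_0)$ together with an extension vertex lying in $W$, and one must pigeonhole over these $K_{\ell-3}$'s to fix a single core $C=\{u\}\cup K_{\ell-3}$ that still retains $t^{1-o(1)}$ extensions. Controlling the number of candidate cores in this averaging, so that the multiplicative loss remains of the form $t^{o(1)}$ (and is absorbed into the $1-o(1)$ in the exponent), is the technical heart of the argument.
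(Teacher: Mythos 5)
The main gap is in part (iii), and you have in fact pointed at it yourself without closing it. Your ``rich core'' (an $(\ell-1)$-clique $C\cup\{b_0\}$ together with $t^{1-o(1)}$ vertices joined to all of $C$) does yield the stated count once it exists, and it is exactly the structure the paper extracts. But the pigeonholing that produces it is not routine: the number of candidate cores, i.e.\ copies of $K_{\ell-1}$ through the chosen edge (equivalently, copies of $K_{\ell-3}$ inside $N(u)\cap N(b_0)$), is not bounded in terms of $t$ a priori, so distributing the $\ge t$ copies of $K_\ell^-$ over the candidate cores could leave fewer than one extension per core. The paper's resolution is a dichotomy that you are missing. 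Let $g(e)$ be the number of copies of $K_{\ell-1}$ through an edge $e$. If $g(e)\ge t^{\eps}$ for every edge, then $G$ is $(K_{\ell-1},t^{\eps})$-locally dense, and the part~(i) machinery (Theorem~\ref{btks} with $\ell-1$ in place of $\ell$, followed by the snaking construction, now building subdivisions of $K_\ell^-$ with prescribed branch-vertex traces) already gives at least $2^{t^{\eps/(2(\ell-3))}}>t^{\ell^2}$ distinguishable subdivisions, which is more than enough. Otherwise some edge $e$ has $g(e)<t^{\eps}$: there are then fewer than $t^{\eps}$ candidate cores, each of the $\ge t$ copies of $K_\ell^-$ through $e$ contains at least one of them, so some core retains at least $t^{1-\eps}$ extensions, and one further factor-$(\ell-1)$ pigeonhole fixes which core vertex is the endpoint of the missing edge. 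Letting $\eps\to 0$ gives the $1-o(1)$ in the exponent. Without this dichotomy your averaging step does not go through.

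Two smaller points. Part (i) is essentially the paper's proof (minimum degree at least $t^{1/(\ell-2)}$ from the local condition, Theorem~\ref{btks}, then threading one subdivided edge through an arbitrary subset of the unused branch vertices) and is correct. In part (ii) your approach --- an independent set dominated by fewer than $\ell$ vertices --- is the right one, but as written it does not give the stated bound: your blow-up has $|P\cup Q|=2t$, so even a sharp count yields $(2t+1)^{e(F)+2\ell}$ rather than $(t+2)^{e(F)+2\ell}$ (the paper instead joins a clique $A$ of size $\ell-2$ completely to an independent set $B$ of size $t+1$; this single graph is $(K_\ell^-,t)$-locally dense and hence works for every non-complete $F$ on $\ell$ vertices simultaneously); and the count ``$TF$ has $O(\ell^2)$ edges'' only bounds $|V(TF)\cap B|$ by roughly $\binom{\ell-1}{2}+\ell$, which exceeds $e(F)+2\ell$ for sparse $F$. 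The stated exponent comes from an alternation argument: since $B$ is independent, on each of the $e(F)$ paths the number of internal $B$-vertices exceeds the number of internal $A$-vertices by at most one, and the internal $A$-vertices sum to at most $|A|\le\ell$ over all paths, so $|V(TF)\cap B|\le 2\ell+e(F)$. These are repairable, but part (iii) requires the new idea described above.
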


For part (i), our bound on $c'(\ell)$ is best possible up to the constant factor $\frac{1}{2}$.
As noted by Tuza~\cite{tuza}, $c'(\ell) \leq \frac{1}{\ell-2}$. This is attained by the graph $G = K_r$, where $r$ satisfies $\binom{r-2}{\ell-2} = t$.
Indeed, every edge of $G$ lies in exactly $t$ copies of $K_\ell$, and every set $X \subseteq V(G)$ of size at least $\ell$ is such that there is a copy of $TK_\ell$ with vertex set $X$.
Therefore,
$$
s_t(K_{\ell},G) = \sum_{i=\ell}^{r} \binom{r}{i} \leq 2^r \leq 2^{t^{\frac{1}{\ell-2}}}.
$$
Nonetheless, we do not believe our bound on $c'(\ell)$ is optimal and so we make no serious attempt to optimise absolute constants.

Part (ii) shows that for all non-complete $F$, the minimum number of distinguishable subdivisions of $F$ among all $(F,t)$-locally dense graphs is at most a polynomial in $t$.
Our upper bound on $s_t(F)$ in (ii) is close to being optimal, as shown by part (iii).


The rest of the paper is organised as follows. In Section~\ref{sec-main}, we give the proof of Theorem~\ref{main}. In Section~\ref{sec-ext}, we study how a natural strengthening of the locally dense condition would change the outcome. Some concluding remarks are given in Section~\ref{sec-conclude}.

\section{Proof of Theorem~\ref{main}}\label{sec-main}

Let us briefly sketch the ideas in the proof.
For part (i), first note that a $(K_\ell,t)$-locally dense graph $G$ has minimum degree which is polynomial in $t$.
Then Theorem~\ref{btks} furnishes us with a subdivision $T$ of $K_d$ in $G$ such that $d$ is polynomial in $t$.
Every subset of the branch vertices of $T$ which has order at least $\ell$ gives rise to a subdivision of $K_\ell$, and all of these are distinguishable.
For part (ii), for each non-complete $F$ we construct a graph $G$ which is $(F,t)$-locally dense but has a very small dominating set $A$ (so each copy of $F$ has very large overlap).
The size of $A$ limits the number of vertices in any subdivision which lie outside of $A$.
The proof of (iii) combines ideas from the previous parts together with an averaging argument.

\medskip
We call those vertices of $TF$ which correspond to those of $F$ the \emph{branch vertices}.\footnote{We use standard graph theoretic notation. For natural numbers $k \leq \ell$, write $[\ell] := \lbrace 1,\ldots, \ell \rbrace$ and write $\binom{[\ell]}{k}$ for the set of $k$-subsets of $[\ell]$.}

\begin{proposition}\label{easy}
Let $k,\ell \in \mathbb{N}$ with $\ell \leq k$ and suppose that $T$ is a subdivision of $K_k$.
Then $T$ contains a subdivision of $K_\ell$ which contains all branch vertices of $T$.
\end{proposition}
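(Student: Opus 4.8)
The plan is to retain $\ell$ of the branch vertices of $T$ as branch vertices of the desired copy of $K_\ell$, and to absorb each of the remaining $k-\ell$ branch vertices as an internal vertex of a single long path. First I would fix notation: write $v_1,\dots,v_k$ for the branch vertices of $T$, and for each pair $\{i,j\}$ let $P_{ij}$ be the $v_i$--$v_j$ path in $T$, so that by the definition of a subdivision the $P_{ij}$ are internally disjoint and pairwise meet only in branch vertices. We may assume $\ell\geq 2$, since for $\ell=1$ the statement is degenerate (a single-vertex $TK_1$ can contain all branch vertices only when $k=1$), and in any case $\ell\geq 3$ in the application. I then designate $v_1,\dots,v_\ell$ to be the branch vertices of the target subdivision $T'$ of $K_\ell$.

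Next I would describe how to realise the $\binom{\ell}{2}$ edges of $K_\ell$ as internally disjoint paths of $T$ that together cover all surplus branch vertices $v_{\ell+1},\dots,v_k$. For every edge $\{i,j\}$ of $K_\ell$ other than $\{1,2\}$, I simply use the direct path $P_{ij}$. For the edge $\{1,2\}$, instead of the direct path I use the concatenation
$$Q := P_{1,\ell+1}\,P_{\ell+1,\ell+2}\,\cdots\,P_{k-1,k}\,P_{k,2},$$
a $v_1$--$v_2$ walk that threads through all of $v_{\ell+1},\dots,v_k$ in turn. The branch vertices of $T'$ are then $v_1,\dots,v_\ell$, while the surplus vertices $v_{\ell+1},\dots,v_k$ appear as internal vertices of $Q$; in particular every branch vertex of $T$ is a vertex of $T'$.

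It remains to verify that this really produces a subdivision of $K_\ell$, and here the only genuine point to check is internal disjointness. First, $Q$ is a genuine path: its constituent $P$'s are pairwise internally disjoint and the only branch vertices on $Q$ are $v_1,v_{\ell+1},\dots,v_k,v_2$, each visited exactly once, so no vertex repeats. Second, the chosen paths realising distinct edges of $K_\ell$ are internally disjoint: every original path composing $Q$ involves an index in $\{\ell+1,\dots,k\}$, whereas each direct path $P_{ij}$ uses only indices in $[\ell]$, so these two families consist of distinct $P$'s, which are internally disjoint by hypothesis. Thus the branch vertices $v_1,\dots,v_\ell$ have the correct degrees (each incident to exactly $\ell-1$ of the chosen paths) and $T'$ is the required subdivision of $K_\ell$.

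I expect the main obstacle to be nothing more than this bookkeeping of internal disjointness while \emph{forcing} all surplus vertices into the subdivision: a naive routing could make two edge-paths collide at a surplus vertex. Channelling every surplus vertex through the single path $Q$ associated to one fixed edge sidesteps this entirely, since it keeps the ``surplus'' original paths cleanly separated from the direct ones, and a single path can absorb arbitrarily many surplus vertices regardless of how large $k-\ell$ is relative to $\binom{\ell}{2}$.
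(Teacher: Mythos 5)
Your proof is correct and is essentially the paper's own argument: keep $\ell$ of the branch vertices, realise all but one edge of $K_\ell$ by the direct paths, and replace the remaining edge by a concatenation of paths threading through the surplus branch vertices $v_{\ell+1},\dots,v_k$. The only (immaterial) difference is that you reroute the edge $\{1,2\}$ whereas the paper reroutes $\{1,\ell\}$.
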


\medskip
\noindent
\emph{Proof.}
Let $x_1,\ldots,x_k$ be the branch vertices of $K_k$ in $T$.
Now $T$ is the union of a set $\lbrace P(i,j): ij \in \binom{[k]}{2} \rbrace$ of internally vertex-disjoint paths where $P(i,j)$ has endpoints $x_i, x_j$.
Then we can find a copy of $TK_\ell$ with branch vertices $x_1,\ldots,x_\ell$ by taking the paths $P(i,j)$ for all $\lbrace i,j \rbrace \in \binom{[\ell]}{2} \setminus \lbrace  1,\ell \rbrace$, and taking the concatenation of the paths $P(\ell,\ell+1), P(\ell+1,\ell+2), \ldots, P(k-1,k),P(1,k)$
to obtain a path between $x_1$ and $x_\ell$ which contains every other branch vertex in $V(T)$.
\hfill$\blacksquare$

\medskip
\noindent
Fix $\ell \in \mathbb{N}$.
We will first prove (i).
We may assume that $\ell \geq 4$ since the statement is known for $\ell = 3$ (see \cite{tuza2}).
Let $G$ be $(K_\ell,t)$-locally dense, that is, every edge $xy \in E(G)$ lies in at least $t$ copies if $K_\ell$.
Then
\begin{equation}\label{degcount}
t \le \binom{|N(x) \cap N(y)|}{\ell-2}\le \binom{d(x)}{\ell-2} < \left( \frac{d(x) \cdot e}{\ell-2} \right)^{\ell-2}
\end{equation}
and so $d(x) > e^{-1}(\ell-2)t^{\frac{1}{\ell-2}}$ for all $x \in V(G)$ which are not isolated. Now Theorem~\ref{btks} implies that $G$ contains a copy of $TK_d$, where
\begin{equation}\label{d}
d = (1-o(1))\sqrt{\frac{23(\ell-2)}{10e}}\cdot t^{\frac{1}{2(\ell-2)}}.
\end{equation}
Let $X := \lbrace x_1,\ldots,x_d \rbrace$ be the set of branch vertices of this copy of $TK_d$ and let $P(i,j)$, $ij \in \binom{[d]}{2}$, be the internally vertex-disjoint path between $x_i$ and $x_j$.

Do the following for each $I \subseteq [d]$ with $|I| \geq \ell$.
Let $X_I := \lbrace x_i : i \in I \rbrace$.
Let $T_I$ be the subdivision of $K_{|I|}$ whose set of branch vertices is precisely $X_I$ and which consists of paths $P(i,j)$ with $ij \in \binom{I}{2}$.
So $V(T_I) \cap X = X_I$.
Proposition~\ref{easy} implies that $T_I$ contains a subgraph $S_I$ which is a subdivision of $K_\ell$ such that $V(S_I) \cap X = X_I$.
Thus obtain a collection $\lbrace S_I : I \subseteq [d], |I| \geq \ell \rbrace$ of subdivisions of $K_\ell$ in $G$.
This collection is distinguishable because $V(S_I) \cap X \neq V(S_{I'}) \cap X$ for distinct $I,I' \subseteq [d]$.
So, when $t$ is sufficiently large,
$$
s(F,G) \geq 2^d - \sum_{i=0}^{\ell-1} \binom{d}{i} \geq 2^d - (d+1)^{\ell-1} \geq 2^{t^{\frac{1}{2(\ell-2)}}},
$$
where we used the fact that $\ell \geq 4$ in~(\ref{d}).
This
completes the proof of part (i) of Theorem~\ref{main}.

\medskip
\noindent
We now turn to the proof of (ii), which will follow from the next lemma.

\begin{lemma}\label{semibip}
Let $G$ be a graph with vertex partition $A \cup B$, where $B$ is an independent set.
Let $F$ be a graph on $\ell \geq |A|$ vertices.
Then $s(F,G) \leq 2^{\ell} (|B|+1)^{e(F)+2\ell}$.
\end{lemma}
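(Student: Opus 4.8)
The plan is to bound the total number of distinct vertex sets that a subdivision of $F$ in $G$ can have; since distinguishable subdivisions have distinct vertex sets by definition, this number is an upper bound for $s(F,G)$. The key structural point is that, because $B$ is independent and $|A|\le \ell$ is small, any single subdivision $T$ of $F$ in $G$ can contain only boundedly many vertices of $B$, whereas its intersection with $A$ ranges over at most $2^{|A|}\le 2^\ell$ subsets. Concretely, I would first prove that $|V(T)\cap B|\le e(F)+2\ell$ for every such $T$, and then multiply the number of choices for $V(T)\cap A$ and $V(T)\cap B$.

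To establish the structural bound, fix a subdivision $T$ of $F$ with branch vertices $x_1,\dots,x_\ell$, and split the $B$-vertices of $T$ into the \emph{branch} $B$-vertices and the \emph{internal} $B$-vertices (those lying in the interior of some subdivision path). There are at most $\ell$ of the former. For the latter, the crucial observation is that each internal $B$-vertex has degree exactly $2$ in $T$, and since $B$ is independent both of its $T$-neighbours must lie in $A$. Double-counting the incidences between internal $B$-vertices and their edges, every such edge is incident to a vertex of $A\cap V(T)$, so if $b_{\mathrm{int}}$ denotes the number of internal $B$-vertices then
$$
2 b_{\mathrm{int}} \le \sum_{w \in A \cap V(T)} \deg_T(w).
$$
The right-hand side is at most $2|A|+2e(F)\le 2\ell+2e(F)$, since the internal $A$-vertices number at most $|A|$ and have degree $2$, while the branch $A$-vertices contribute at most $\sum_v \deg_F(v)=2e(F)$. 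Hence $b_{\mathrm{int}}\le \ell+e(F)$, and altogether $|V(T)\cap B|\le 2\ell+e(F)$.

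With the structural bound in hand, the counting is routine. The set $V(T)\cap A$ is one of at most $2^{|A|}\le 2^\ell$ subsets of $A$, and $V(T)\cap B$ is a subset of $B$ of size at most $e(F)+2\ell$, of which there are at most $(|B|+1)^{e(F)+2\ell}$ (the number of subsets of an $n$-set of size at most $m$ is at most $(n+1)^m$, by the obvious injective encoding into length-$m$ sequences over $\{0,1,\dots,n\}$). Since $V(T)=(V(T)\cap A)\cup(V(T)\cap B)$, multiplying these counts yields $s(F,G)\le 2^\ell(|B|+1)^{e(F)+2\ell}$, as required.

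The main obstacle is the structural bound $|V(T)\cap B|\le e(F)+2\ell$; once one notices that independence of $B$ forces every internal $B$-vertex to sit strictly between two $A$-vertices, the degree-counting argument controls how many such vertices can be packed in, given that $A$ is small. The subsequent subset-counting step is elementary, and I do not expect the precise constants (for instance whether the exponent is exactly $e(F)+2\ell$ or marginally smaller) to require any new idea.
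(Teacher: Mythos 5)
Your proof is correct, and it reaches the same intermediate bound $|V(T)\cap B|\le 2\ell+e(F)$ as the paper before finishing with the identical subset count, but the mechanism for that bound is different. The paper works path by path: for each subdivision path $P$ it compares the number $b_P$ of internal $B$-vertices with the number $a_P$ of internal $A$-vertices (getting $b_P\le a_P+1$, with better bounds depending on where the endpoints lie, since the $B$-vertices along a path must be separated by $A$-vertices), and then sums over the $e(F)$ paths using $\sum_P a_P\le |A|\le\ell$. You instead run a single global double count: every internal $B$-vertex has degree $2$ in $T$ with both neighbours in $A\cap V(T)$, so $2b_{\mathrm{int}}\le\sum_{w\in A\cap V(T)}\deg_T(w)\le 2|A|+2e(F)$, using that internal $A$-vertices have degree $2$ and branch vertices have total degree $2e(F)$. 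Your version avoids the case analysis on path endpoints and is arguably the cleaner formulation of the same underlying fact (that $B$-vertices of $T$ must be ``paid for'' by incidences at the small set $A$); the paper's version gives slightly more refined per-path information, which is not needed here. Both yield the same constants, and your counting step (at most $2^{|A|}$ choices for $V(T)\cap A$ and at most $(|B|+1)^{e(F)+2\ell}$ choices for $V(T)\cap B$, via the padding injection) matches the paper's.
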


\medskip
\noindent
\emph{Proof.}
Write $V(F) = [\ell]$.
Let $T$ be a subdivision of $F$ in $G$, and let $x_1,\ldots,x_\ell$ be its branch vertices.
So $T$ consists of pairwise internally vertex-disjoint paths $P(i,j)$ for all $ij \in E(F)$, where $P(i,j)$ has endpoints $x_i,x_j$.
Denote by $\mathcal{P}$ the union of the $P(i,j)$.
Let $\mathcal{P}(A,A)$ be the set of $P(i,j)$ such that $x_i,x_j \in A$, and define $\mathcal{P}(A,B) = \mathcal{P}(B,A)$ and $\mathcal{P}(B,B)$ analogously.
Given $P = P(i,j)$, let $a_P$ be the number of internal vertices of $P$ which lie in $A$, i.e.~$a_P := |A \cap (V(P)\setminus \lbrace x_i,x_j \rbrace)|$ and define $b_P$ analogously for $B$.
Since $B$ is an independent set, the following relationships are easy to see:
\begin{align*}
b_P &\leq a_P+1 \text{ for all } P \in \mathcal{P}(A,A);\\
b_P &\leq a_P \text{ for all } P \in \mathcal{P}(A,B);\\
b_P &\leq a_P-1 \text{ for all } P \in \mathcal{P}(B,B).
\end{align*}
Then, since $|\cP|= e(F)$,
\begin{align*}
\ell \geq |A| \geq \sum_{P \in \mathcal{P}}a_P \geq \sum_{P \in \mathcal{P}}(b_P-1)=\sum_{P \in \mathcal{P}}b_P-|\cP|\quad \Rightarrow \quad \sum_{P \in \mathcal{P}}b_P \le \ell+|\cP|= \ell+e(F).
\end{align*}
Denote by $b^*$ the number of branch vertices of $T$ which lie in $B$. Clearly, $b^*\le \ell$, hence, 
$$|V(T) \cap B| = b^* + \sum_{P \in \mathcal{P}}b_P \leq 2\ell + e(F).$$

Therefore an upper bound for $s(F,G)$ can be obtained by counting the number of subsets of $V(G)$ containing at most $2\ell+e(F)$ vertices in $B$.
Thus
$$
s(F,G) \leq 2^{|A|} \cdot \sum_{i = 0}^{2\ell + e(F)} \binom{|B|}{i} \leq 2^{\ell} (|B|+1)^{e(F)+2\ell},
$$
proving the lemma.
\hfill$\blacksquare$

\medskip
\noindent
We will construct a graph $G$ for which $s(F,G) \leq 2^\ell(t+2)^{e(F)+2\ell}$, via Lemma~\ref{semibip}.
Let $A,B$ be disjoint sets of vertices with $|A|=\ell-2$ and $|B| = t+1$, and let $V(G) = A \cup B$.
Add every edge to $G$ with at least one endpoint in $A$.
For any graph $F$ on $\ell$ vertices which is not complete, we claim that $G$ is $(F,t)$-locally dense.
It suffices to show that $G$ is $(K_\ell^- ,t)$-locally dense. Let $e$ be the non-adjacent pair in $K_\ell^-$ and let $xy \in E(G)$.
If $x,y \in A$, then for any of the $\binom{t+1}{2}>t$ pairs $\lbrace w,z \rbrace$ of vertices in $B$, we have that $G[A \cup \lbrace w,z \rbrace]$ is isomorphic to $K_\ell^-$ with $wz$ playing the role of $e$.
Without loss of generality, the only other case is $x \in A$, $y \in B$.
Then similarly we can choose any of the $t$ vertices $w \in B \setminus \lbrace y \rbrace$ so that $wy$ plays the role of the missing edge $e$.
Therefore every edge $xy \in E(G)$ lies in at least $t$ copies of $K_\ell^-$, and hence $F$.
Now Lemma~\ref{semibip} implies that $s_t(F) \leq s(F,G) \leq 2^\ell(t+2)^{e(F)+2\ell}$, as required.

\medskip
\noindent
Finally, we prove (iii).
Let $\eps > 0$ and let $t$ be sufficiently large compared to $\eps$.
Let $G$ be $(K_\ell^-,t)$-locally dense.
We will show that $s(K_\ell^-,G) \geq t^{(1-2\eps)(e(K_\ell^-)-2\ell+5)}$.
For each edge $e \in E(G)$, let $g(e)$ be the number of copies of $K_{\ell-1}$ in $G$ which contain $e$.
Suppose first that $g(e) \geq t^{\eps}$ for all $e \in E(G)$.
Then $G$ is $(K_{\ell-1},t^{\eps})$-locally dense.
So, by (\ref{d}) and Theorem~\ref{btks}, $G$ contains a copy of $K_d$, where $d = (\sqrt{\ell}/3) t^{\eps/(2(\ell-3))}$.
The same argument as (i) shows that
$$
s(K_\ell^-,G) \geq s(K_\ell,G) \geq 2^d - (d+1)^{\ell-1} \geq 2^{\frac{1}{2}t^{\frac{\eps}{2(\ell-3)}}} > t^{\ell^2}
$$
whenever $t$ is sufficiently large compared to $\eps$ and $\ell$ (and recall that $\ell \geq 4$).

So we may assume that there is some $e \in E(G)$ with $g(e) < t^{\eps}$.

\begin{claim}
There is a subgraph $H$ of $G$ with vertex partition $A \cup B$ where $|A|=\ell-1$ and $|B| \geq 2t^{1-\eps}/(\ell-1)$, such that $H[A]$ is complete and there is $z \in A$ such that $H[A\setminus \lbrace z \rbrace, B]$ is a complete bipartite graph.
\end{claim}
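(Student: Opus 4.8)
The plan is to exploit the gap between the two local densities at the fixed edge $e=xy$: by hypothesis $e$ lies in at least $t$ copies of $K_\ell^-$, whereas $g(e)<t^{\eps}$, so $e$ lies in fewer than $t^{\eps}$ copies of $K_{\ell-1}$. The starting point is the elementary fact that deleting either endpoint of the missing edge of $K_\ell^-$ leaves a complete graph $K_{\ell-1}$. Since the missing pair of $K_\ell^-$ is a non-edge while $xy$ is an edge, at most one endpoint of the missing edge can lie in $\{x,y\}$; deleting the other one yields a copy of $K_{\ell-1}$ still containing both $x$ and $y$, hence $e$. Thus every one of the $\ge t$ copies of $K_\ell^-$ through $e$ contains a copy of $K_{\ell-1}$ through $e$, giving a map from these copies to the fewer than $t^{\eps}$ vertex sets spanning a $K_{\ell-1}$ through $e$.

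First I would apply pigeonhole to this map to find a single $K_{\ell-1}$, on a vertex set $A$ with $|A|=\ell-1$, that is the image of more than $t/t^{\eps}=t^{1-\eps}$ copies of $K_\ell^-$ through $e$. I take $H[A]$ to be this clique, so $H[A]$ is complete and $e\subseteq A$. Each such copy arises from $A$ by adding one vertex $b\notin A$, where $A$ itself is obtained by deleting a missing-edge endpoint; hence the missing edge of that copy is incident to $b$, and so $b$ is adjacent in $G$ to every vertex of $A$ except possibly one ``non-neighbour'' $w_b\in A$.

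Next I would group these copies by the value $w_b\in A$ and let the special vertex $z\in A$ be the most popular such value. Setting $B$ to be the set of added vertices $b$ that are adjacent to all of $A\setminus\{z\}$ (that is, those with $w_b=z$ together with those complete to $A$), the bipartite graph $H[A\setminus\{z\},B]$ is complete by construction, so $H$ on $A\cup B$ has exactly the claimed form. Averaging the more than $t^{1-\eps}$ copies over the at most $\ell-1$ possible non-neighbours then gives $|B|\gtrsim t^{1-\eps}/(\ell-1)$, which is already of the right order.

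The main obstacle is extracting the constant $2/(\ell-1)$ cleanly, and this is where the bookkeeping must be done with care. Two points arise. A vertex $b$ adjacent to \emph{all} of $A$ can serve as the added vertex for several copies (one per omitted edge at $b$), so the passage from ``number of copies'' to ``number of vertices in $B$'' should be made through the counts of $b$ with a prescribed non-neighbour rather than through copies directly. Moreover, to beat the crude factor one observes that for any $w\in A\setminus\{x,y\}$, every vertex $b$ adjacent to $A\setminus\{w\}$ produces a \emph{distinct} copy of $K_{\ell-1}$ through $e$ (namely $(A\setminus\{w\})\cup\{b\}$), so at most $g(e)<t^{\eps}$ of the copies can have such a $w$ as non-neighbour. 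Consequently all but a negligible $O(\ell\, t^{\eps})$ fraction of the $t^{1-\eps}$ copies have $w_b\in\{x,y\}$, which forces $z$ to be an endpoint of $e$ and concentrates the count on two classes; this is what lifts the bound to the stated $2t^{1-\eps}/(\ell-1)$ once $t$ is large compared with $\eps$ and $\ell$.
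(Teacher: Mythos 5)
Your proposal follows essentially the same route as the paper: a first pigeonhole over the fewer than $t^{\eps}$ copies of $K_{\ell-1}$ through $e$ to fix the clique $A$, and a second one over the possible non-neighbours in $A$ of the added vertex to fix $z$. Your care in passing from ``copies'' to ``added vertices'' (and the observation that within a single non-neighbour class the added vertices are automatically distinct) is correct and matches the paper. The gap is in the final constant. Because your first pigeonhole assigns each copy of $K_\ell^-$ through $e$ to only \emph{one} copy of $K_{\ell-1}$ through $e$, it produces a class of size only $t/g(e) > t^{1-\eps}$ rather than $2t^{1-\eps}$. Your proposed repair --- showing that all but $O(\ell t^{\eps})$ of these copies have their non-neighbour $w_b$ in $\{x,y\}$, so that the second pigeonhole is effectively over two classes --- yields $|B| \geq \bigl(t^{1-\eps}-O(\ell t^{\eps})\bigr)/2 \approx t^{1-\eps}/2$. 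But $\tfrac12 < \tfrac{2}{\ell-1}$ when $\ell=4$ (and the bound misses by a lower-order term when $\ell=5$), so this does not establish the Claim with the stated constant for $K_4^-$, which is within the scope of Theorem~\ref{main}(iii). The shortfall is harmless for the downstream application, where any constant depending only on $\ell$ is absorbed into the $t^{-\eps(\cdots)}$ slack, but as a proof of the Claim as written it is a genuine quantitative gap.

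The paper extracts the factor $2$ at the \emph{first} step instead: it double-counts pairs $(K,J)$ with $J\subseteq K$, where $K$ ranges over the $\geq t$ copies of $K_\ell^-$ through $e$ and $J$ over the $g(e)$ copies of $K_{\ell-1}$ through $e$, asserting that each $K$ contains two such $J$ (delete either endpoint of its missing pair); this gives some $J$ lying in at least $2|\mathcal{K}|/g(e)\geq 2t^{1-\eps}$ copies, and dividing by the $\ell-1$ choices of $z$ yields exactly $2t^{1-\eps}/(\ell-1)$. Note that your own analysis of the copies whose missing pair meets $\{x,y\}$ shows this ``exactly two'' premise is delicate: such a copy contains only one $K_{\ell-1}$ through $e$, so the factor $2$ in the first averaging is not automatic either. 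If you want a clean fix on your side, the simplest is to prove the Claim with the constant $1/(\ell-1)$ (or $1/2$), which your argument already delivers and which suffices verbatim for the estimate at the end of the proof of (iii).
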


\medskip
\noindent
\emph{Proof.}
Let $\mathcal{J}$ be the set of copies of $K_{\ell-1}$ containing $e$ and let $\mathcal{K}$ be the set of copies of $K_{\ell}^-$ containing $e$.
So $|\mathcal{J}| = g(e)$ and $|\mathcal{K}| \geq t$.
Now, every copy of $K_\ell^-$ containing $e$ contains exactly two distinct $J \in \mathcal{J}$ as subgraphs.
So, by averaging, there is some $J \in \mathcal{J}$ such that there is $\mathcal{K}' \subseteq \mathcal{K}$ with the property that $J \subseteq K$ for all $K \in \mathcal{K}'$ and
\begin{equation}\label{K}
|\mathcal{K}'| \geq \frac{2|\mathcal{K}|}{g(e)}\ge 2t^{1-\eps}.
\end{equation}
Every $K \in \mathcal{K}'$ has exactly one non-adjacent pair, and exactly one member of this pair lies in $V(J)$.
Averaging once again we see that there is some $z \in V(J)$ and $\mathcal{K}'' \subseteq \mathcal{K}'$ such that for all $K \in \mathcal{K}''$ every $y \in V(K)$ is incident with every $x \in V(J) \setminus \lbrace z \rbrace$; and
$$
|\mathcal{K}''| \geq \frac{|\mathcal{K}'|}{\ell-1} \stackrel{(\ref{K})}{\geq} \frac{2t^{1-\eps}}{\ell-1}.
$$
Now, let $A := V(J)$ and let $B := \bigcup_{K \in \mathcal{K}''}V(K) \setminus V(J)$.
Every $K \in \mathcal{K''}$ contains precisely one vertex $x_K$ outside of $V(J)$ and the only non-adjacent pair in $K$ is $x_Kz$.
Therefore the vertices $x_K$ are distinct for distinct $K$.
So $|B| = |\mathcal{K}''|$, as required.
The remaining properties are clear.
\hfill$\blacksquare$

\medskip
\noindent
It will suffice to give a lower bound for $s(K_\ell^-,H)$.
We will count the number of subdivisions $T$ of $K_\ell^-$ in $H$ with specific properties.
Label the vertices of $K_\ell^-$ by $1,\ldots,\ell$, where $E(K_\ell^-) = \binom{[\ell]}{2}\setminus \lbrace \ell-1,\ell \rbrace$.
Write $A = \lbrace x_1,\ldots,x_{\ell-1} := z \rbrace$.
Let $x_\ell$ and $x_{ij}$ for all $ij \in \binom{[\ell-2]}{2}$ be arbitrary distinct vertices in $B$ and let $X$ be the set consisting of these vertices.
Using these we can find a subdivision $T(X)$ of $K_\ell^-$ in $H$ with vertex set $A \cup X$, branch vertices $x_1,\ldots,x_\ell$ and paths $P(i,j)$ for all $ij \in E(K_\ell^-)$, as follows.
\begin{itemize}
\item For all $i \in [\ell-1]$, the vertex $x_i \in A$ corresponds to $i \in V(K_\ell^-)$, and the vertex $x_\ell \in B$ corresponds to $\ell \in V(K_\ell^-)$.
\item For all $ij \in \binom{[\ell-2]}{2}$, each $P(i,j)$ has precisely one internal vertex $x_{ij}$, which lies in $B$. For all $i \in [\ell-2]$ and $j \in \lbrace \ell-1,\ell \rbrace$, $P(i,j)$ is the edge $x_ix_j$. 
\end{itemize}
Note that $T(X)$ exists because $H[A]$ is complete and $H[A \setminus \lbrace x_{\ell-1} \rbrace,B]$ is complete bipartite. Different choices of $X$ give rise to distinguishable $T(X)$ since $V(T(X)) = A \cup X$.
Each $X$ has order $\binom{\ell-2}{2}+1$, so
\begin{eqnarray*}
s(K_\ell^-,G) &\geq& s(K_\ell^-,H) \geq \binom{|B|}{\binom{\ell-2}{2}+1} \geq \left(\frac{2t^{1-\eps}}{(\ell-1)\left(\binom{\ell-2}{2}+1\right)}\right)^{\binom{\ell-2}{2}+1}\\
& \geq &t^{(1-2\eps)\left(\binom{\ell-2}{2}+1\right)}= t^{(1-2\eps)(e(K_\ell^-)-2\ell+5)},
\end{eqnarray*}
completing the proof of (iii).
This concludes the proof of Theorem~\ref{main}.
\hfill$\square$

\section{A spectrum of local conditions}\label{sec-ext}
We now investigate a spectrum of progressively stronger conditions for host graphs $G$, and characterise those graphs $F$ such that every graph $G$ satisfying the condition contains exponentially many copies of $TF$.
Given a graph $F$ on $\ell$ vertices, for each $3 \leq k \leq \ell$, we make the following definition. 
We say that a graph $G$ is \emph{$(F,k,t)$-locally dense} if every edge of $G$ lies in at least $t$ copies of $F'$ for all subgraphs $F' \subseteq F$ with $|F'| \geq k$.
Let $s_t(F,k)$ be the minimum of $s(F,G)$ over all $(F,k,t)$-locally dense graphs $G$.

The next theorem generalises Theorem~\ref{main} by describing the set of all graphs $F$ on $\ell$ vertices for which one can find $c,c' > 0$ such that whenever $t$ is a sufficiently large integer compared to $\ell$, we have $s_t(F,k) \geq c^{t^{c'}}$.
Note that $(F,\ell,t)$-locally dense is the same as $(F,t)$-locally dense.
So this notion seems to be the most natural strengthening of $(F,t)$-locally dense.
As expected, the family of those $F$ of order $\ell$ giving rise to exponentially many subdivisions when the host graph is $(F,k,t)$-locally dense gets strictly larger as $k$ decreases.
In fact $F$ lies in this family if and only if $F \supseteq K_k$.

\begin{theorem}\label{general}
Let $\ell,k \in \mathbb{N}$ and $3 \leq k \leq \ell$. Then, whenever $t$ is sufficiently large, the following hold.
\begin{itemize}
\item[(i)] For all graphs $F$ containing a copy of $K_k$, we have that $s_t(F,k) \geq 2^{t^{c'(k)}}$, where $c'(k) \geq \frac{1}{2(k-2)}$.
\item[(ii)] For all $K_k$-free graphs $F$, we have that $s_t(F,k) \leq 2^\ell(t+2)^{e(F)+2\ell}$.
\end{itemize}
\end{theorem}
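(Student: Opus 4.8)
The plan is to deduce both parts from the proof of Theorem~\ref{main}, exploiting the dichotomy that $F$ either contains a copy of $K_k$ or is $K_k$-free. For part (i), the point is that if $F$ contains a copy of $K_k$ then this copy is a subgraph $F' = K_k \subseteq F$ with $|F'| = k$, which meets the order threshold $|F'| \geq k$ in the definition of $(F,k,t)$-local density; thus every edge of $G$ lies in at least $t$ copies of $K_k$, i.e.\ $G$ is $(K_k,t)$-locally dense. I would then rerun the proof of part (i) of Theorem~\ref{main} with $\ell$ replaced by $k$: inequality~(\ref{degcount}) bounds the minimum degree below by a power of $t$, and Theorem~\ref{btks} produces a copy of $TK_d$ with $d$ as in~(\ref{d}) (again with $\ell$ replaced by $k$), so that $c'(k) \geq \frac{1}{2(k-2)}$. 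For each $I \subseteq [d]$ with $|I| \geq \ell$ I extract a subdivision of $F$ whose branch vertices are $\ell$ of the $x_i$ with $i \in I$ and which contains all of $X_I$, so that distinct $I$ yield distinguishable subdivisions and the count $2^d-(d+1)^{\ell-1} \geq 2^{t^{c'(k)}}$ is unchanged. The one new point is that a subdivision of $K_\ell$ gives a subdivision of $F$ only after deleting the paths of the non-edges of $F$; to retain the surplus branch vertices $x_{\ell+1},\dots,x_{|I|}$ (and thus distinguishability) I route them, exactly as in the proof of Proposition~\ref{easy}, along a single path corresponding to a fixed edge of $F$, which exists because $F \supseteq K_k$.

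For part (ii), I would mimic the construction following Lemma~\ref{semibip}: let $G$ be the complete split graph on $A \cup B$, where $A$ is a clique of size $\ell$, $B$ is an independent set of size $t+1$, and all $A$--$B$ edges are present. Since $|A| = \ell$ and $|B| = t+1$, Lemma~\ref{semibip} at once gives $s(F,G) \leq 2^\ell(t+2)^{e(F)+2\ell}$, so it remains only to verify that $G$ is $(F,k,t)$-locally dense. Fix a subgraph $F' \subseteq F$ with $|F'| \geq k$ and at least one edge, and fix an edge $e$ of $G$; I must find $t$ copies of $F'$ through $e$. Because $F'$ is $K_k$-free on at least $k$ vertices it is not complete, so $\alpha(F') \geq 2$ and $F'$ has a vertex $u$ with both a neighbour and a non-neighbour. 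If $e$ lies inside $A$, I map an edge of $F'$ onto $e$, place all but one of the remaining vertices of $F'$ into $A$, and let the image of the last vertex range over $B$. If $e = xy$ is an $A$--$B$ edge with $y \in B$, I send $u$ to $y$ and a neighbour of $u$ to $x$, and let the image of a fixed non-neighbour of $u$ range over $B \setminus \{y\}$; since these two vertices are non-adjacent in $F'$, this is consistent with $B$ being independent. In both cases $e$ lies in at least $t$ distinguishable copies of $F'$.

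The step I expect to be the main obstacle is this local-density check in part (ii), which must hold for every subgraph $F'$ of order at least $k$ and for both types of edge of $G$. This is exactly where $K_k$-freeness of $F$ is essential: non-completeness of each such $F'$ is precisely what allows an $A$--$B$ edge to lie in many copies, whereas a complete $F'$ would pin its whole clique the moment the $B$-endpoint of such an edge is fixed---this is the same obstruction responsible for the exponential lower bound in part (i). A minor loose end is that the verification only concerns subgraphs $F'$ that contain an edge; for edgeless $F'$ the local condition is vacuous and may be ignored.
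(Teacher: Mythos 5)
Your proposal is correct and follows essentially the same route as the paper: part (i) by observing that $(F,k,t)$-local density implies $(K_k,t)$-local density and rerunning the argument of Theorem~\ref{main}(i), and part (ii) by a complete split graph fed into Lemma~\ref{semibip} (the paper takes $|A|=\ell-2$ rather than $\ell$, an immaterial difference). In fact your treatment of (i) is more careful than the paper's, which simply cites Theorem~\ref{main}(i) even though that produces distinguishable subdivisions of $K_k$ rather than of $F$; your extra step of routing the surplus branch vertices of $X_I$ along the path of a single fixed edge of $F$, as in Proposition~\ref{easy}, is exactly what is needed to close that gap, and your local-density verification in (ii), including the existence of a vertex of $F'$ with both a neighbour and a non-neighbour, is sound.
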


\begin{proof}
For (i), let $F$ be a graph on $\ell$ vertices containing a copy of $K_k$.
Let $G$ be a $(F,k,t)$-locally dense graph.
Then $G$ is certainly $(K_k,t)$-locally dense.
The conclusion follows immediately from Theorem~\ref{main}(i).

For (ii), note that $F$ is $K_k$-free if and only if every subgraph $F' \subseteq F$ with $|F'| \geq k$ is such that $F'$ contains a pair of non-adjacent vertices.
Therefore the graph $G$ we constructed in the proof of Theorem~\ref{main}(ii) is in fact $(F,k,t)$-dense.
(To see this, one can use the same argument, as there is always a non-adjacent pair in $F'$ which can we embed into $B$ in at least $t$ different ways.) The conclusion follows from Lemma~\ref{semibip}.
\end{proof}

\section{Concluding remarks}\label{sec-conclude}

We have shown that among all graphs $G$ which are $(F,t)$-locally dense, for large $t$ the minimum number of distinguishable subdivisions of $F$ in $G$ is exponential in a power of $t$ if and only if $F$ is a complete graph.
Whenever $F$ contains a non-adjacent pair of vertices, there is a simple construction of an $(F,t)$-locally dense graph $G$ of order $O(t)$ with a very small dominating set $A$ of order less than $|F|$.
This property means that any subdivision of $F$ in $G$ cannot contain many vertices outside of $A$ which in turn implies $s(F,G)$ is polynomial in $t$.

When one strengthens the local condition on $G$ (in order to increase the family of graphs $F$ which give rise to exponentially many subdivisions) in the most natural way, it is easy to characterise this family using our earlier results.
Therefore it would be of interest to restate the question of Tuza with an entirely different local condition.
To be interesting, such a condition on the host graph $G$ cannot imply for all $F$ that $G$ has minimum degree at least some polynomial in $t$; otherwise Theorem~\ref{btks} and the argument in Theorem~\ref{main}(i) imply the desired conclusion.

We remark that our proof of Theorem~\ref{main} also works when `edge' is replaced by `vertex' in our local condition: that is, we consider host graphs $G$ in which every vertex lies in at least $t$ copies of $F$.
%


\medskip

{\footnotesize \obeylines \parindent=0pt

\begin{tabular}{ll}
Hong Liu, Maryam Sharifzadeh and Katherine Staden            \\
Mathematics Institute		  		 \\
University of Warwick\\
Coventry                             			\\
CV4 2AL				  			\\
UK						     \\
\end{tabular}
}

\begin{flushleft}
{\it{E-mail addresses}:
\tt{$\lbrace$h.liu.9, k.l.staden, m.sharifzadeh$\rbrace$@warwick.ac.uk.}}
\end{flushleft}


\begin{thebibliography}{10}

\bibitem{bt} B.~Bollob\'as and A.~Thomason, Highly linked graphs, \emph{Combinatorica} {\bf 16} (1996), 313--320.
\bibitem{eh} P.~Erd\H{o}s and A.~Hajnal, On topological complete subgraphs of certain graphs, \emph{Ann. Univ. Sci. Budapest} (1969), 193--199.
\bibitem{jung} H.~A.~Jung, Eine Verallgemeinerung des $n$-fachen Zusammenhangs f\"{u}r Graphen, \emph{Math. Ann.} {\bf 187} (1970), 95--103.
\bibitem{KLShS} J.~Kim, H.~Liu, M.~Sharifzadeh and K.~Staden, Proof of Koml\'os's conjecture on Hamiltonian subsets, \emph{submitted}.
\bibitem{ks} J.~Koml\'os and E.~Szemer\'edi, Topological cliques in graphs II, \emph{Comb. Prob. Comp.} {\bf 5} (1996), 79--90.
\bibitem{KO-sub} D.~K{\"u}hn and D.~Osthus, Extremal connectivity for topological cliques in bipartite graphs, \emph{J. Combin. Theory Ser. B} {\bf 96} (2006), 73--99.
\bibitem{mader} W.~Mader, Hinreichende Bedingungen f\"{u}r die Existenz von Teilgraphen, die zu einem vollst\"{a}ndingen Graphen homomorph sind, \emph{Math. Nachr.} {\bf 53} (1972), 145--150.
\bibitem{tuza2} Zs.~Tuza, Exponentially many distinguishable cycles in graphs, \emph{J. Comb. Inf. Sys. Science} {\bf 15} (1--4) (1990), 281--285.
\bibitem{tuza} Zs.~Tuza, Unsolved combinatorial problems part I, \emph{BRICS Lecture Series} LS-01-1 (2001).
\bibitem{tuza3} Zs.~Tuza, Problems on cycles and colorings, \emph{Disc. Math.} {\bf 313} (2013), 2007--2013.

\end{thebibliography}
\end{document}